\providecommand{\U}[1]{\protect\rule{.1in}{.1in}}
\newtheorem{theorem}{Theorem}
\newtheorem{corollary}[theorem]{Corollary}
\newtheorem{definition}[theorem]{Definition}
\newtheorem{lemma}[theorem]{Lemma}
\newenvironment{proof}[1][Proof]{\noindent\textbf{#1.} }{\ \rule{0.5em}{0.5em}}
\begin{document}

\title{Topological versions of Abel-Jacobi, the height pairing, and the Poincar\'{e} bundle}
\author{Mirel Caib\u{a}r
\and C. Herbert Clemens}
\date{November 25, 2012}
\maketitle

\section{Introduction}

This note continues the extension of normal function constructions to the
topological setting begun in \cite{C}. This time we begin with a topological
version of the Abel-Jacobi map for algebraic cycles, a slight variant of the
topological version developed in \cite{HLZ} and \cite{DK}. We then go on to
construct a topological version of the height pairing for cycles and interpret
it as a lifting of (the Abel-Jacobi image of) the pair of cycles to a point in
the fiber of the Poincar\'{e} bundle. The construction was inspired by the
work of Hain \cite{Hai} in the algebraic case. In fact, a one-sentence summary
is that we simply unwind Hain's constructions on a principally polarized
(intermediate) Jacobian bundle from the underlying complex structure via the
Cheeger-Simons theory of differential characters.

In the algebraic setting, the archimedian height pairing of Beilinson, Bloch,
Gillet-Soul\'{e} is a kind of linking number between two disjoint,
homologically trivial cycles of codimension $n$ on a $\left(  2n-1\right)
$-dimensional algebraic manifold. (See, for example, Definition 1.1 of
\cite{Wa}.) We adapt this notion to topological cycles in a way that makes
clearer the behaviour of the height pairing as one of the cycles is moved in
such a way as to cross the other. Roughly speaking, given two primitive
classes $\eta,\eta^{\prime}\in H^{2n}\left(  W;\mathbb{Z}\right)  $ on a
complex manifold $W$ of dimension $2n$ and their associated normal functions%
\[
\alpha_{\eta},\alpha_{\eta^{\prime}}:\mathbb{L}\rightarrow J\left(
X_{\mathbb{L}}/\mathbb{L}\right)
\]
over a Lefschetz pencil $\mathbb{L}$ as in \cite{C}, in this note we associate
to each $p\in\mathbb{L}$ submanifolds $\Sigma_{p},\Sigma_{p}^{\prime}\subseteq
X_{p}$ of real dimension $2\left(  n-1\right)  $ whose respective loci, as $p$
transverses $\mathbb{L}$ cut out the Poincar\'{e} duals of $\eta$ and
$\eta^{\prime}$ respectively. Let $\beta_{\eta^{\prime}}$ denote the image of
$\alpha_{\eta^{\prime}}$ under the isomorphism%
\[
J\left(  X_{\mathbb{L}}/\mathbb{L}\right)  \rightarrow J\left(  X_{\mathbb{L}%
}/\mathbb{L}\right)  ^{\vee}%
\]
induced by the principal polarization. If $%
{\displaystyle\int\nolimits_{X_{p}}}
\eta\wedge\eta^{\prime}=0$ the loci $\left\{  \Sigma_{p}\right\}
_{p\in\mathbb{L}}$ and $\left\{  \Sigma_{p}^{\prime}\right\}  _{p\in
\mathbb{L}}$ can be chosen to be disjoint. By the results explained in this
note, this corresponds to the fact that we have a global lifting of the graph
of%
\[
\left(  \alpha_{\eta},\beta_{\eta^{\prime}}\right)  :\mathbb{L}\rightarrow
J\left(  X_{\mathbb{L}}\right)  \times_{\mathbb{L}}J\left(  X_{\mathbb{L}%
}\right)  ^{\vee}/\mathbb{L}%
\]
to a never-zero section of the pull-back of the Poincar\'{e} bundle,
indicating that the Chern class of the pull-back is zero, as it must be by
\cite{C}. On the other hand, if $%
{\displaystyle\int\nolimits_{X_{p}}}
\eta\wedge\eta^{\prime}=m$, then the obstructions to keeping the loci the loci
$\left\{  \Sigma_{p}\right\}  _{p\in\mathbb{L}}$ and $\left\{  \Sigma
_{p}^{\prime}\right\}  _{p\in\mathbb{L}}$ disjoint can be localized to simple
obstructions at $m$ points $p_{j}\in\mathbb{L}$ such that the $X_{p_{j}}$ are
smooth. Each contributes winding number $1$ to the intersection number of
$\eta$ and $\eta^{\prime}$.

We then go on to a more detailed analysis of the case in which $\Sigma$ and
$\Sigma^{\prime}$ lie in $CH_{hom}^{n}\left(  X_{p}\right)  $. We show that
the topological height pairing is simply the real part of a complex invariant
whose imaginary part $\left[  \Sigma.\Sigma^{\prime}\right]  $ is the height
pairing defined in \cite{GS}. We go on to derive some corollaries of this fact.

The purpose the Part I of this paper is to define a topological height pairing
$\left(  \Sigma.\Sigma^{\prime}\right)  $ for homologically trivial cycles on
a fixed smooth $X_{p}=X$ and to show that this height pairing is nothing more
nor less than the choice of a section of a topological version of the
Poincar\'{e} bundle considered as a circle bundle over $J\left(  X\right)
\times J\left(  X\right)  ^{\vee}$. The purpose of Part II of this paper is to
treat the case in which $\Sigma$ and $\Sigma^{\prime}$ lie in $CH_{hom}%
^{n}\left(  X\right)  $. We show that
\[
\left(  \Sigma.\Sigma^{\prime}\left(  t\right)  \right)  +i\left[
\Sigma.\Sigma^{\prime}\left(  t\right)  \right]
\]
varies holomorphically for an algebraic family $\left\{  \Sigma^{\prime
}\left(  t\right)  \right\}  _{t\in T}$ and characterize it as a section of
the flat line bundle on $T$ given by the incidence divisor of $\Sigma$.

The authors are extremely grateful to Christian Schnell, who, upon receipt of
\cite{C}, pointed us in the direction of Hain's paper and to referee of an
earlier version for pointing out inaccuracies.

Two notational remarks--all integral homology and cohomology in this paper is
taken modulo torsion and, for a cycle $\Sigma$ on $X$, $\left\vert
\Sigma\right\vert \subseteq X$ will denote the support of the cycle.

\part{Topological setting}

\section{The setting}

Let $X$ denote a smooth complex projective manifold of (complex) dimension
$2n-1$ with very ample polarization $\mathcal{O}_{X}\left(  1\right)  $ and
K\"{a}hler metric induced by the associated projective imbedding. Define%
\begin{align*}
J\left(  X\right)   &  =\frac{H_{2n-1}\left(  X;\mathbb{R}\right)  }%
{H_{2n-1}\left(  X;\mathbb{Z}\right)  }\\
J\left(  X\right)  ^{\vee}  &  =\frac{H^{2n-1}\left(  X;\mathbb{R}\right)
}{H^{2n-1}\left(  X;\mathbb{Z}\right)  }=Hom\left(  H_{2n-1}\left(
X;\mathbb{Z}\right)  ,\frac{\mathbb{R}}{\mathbb{Z}}\right)  .
\end{align*}

\begin{definition}
\label{PB}The Poincar\'{e} circle bundle is the bundle whose total space is
the quotient of the action of $H_{2n-1}\left(  X;\mathbb{Z}\right)  \times
H^{2n-1}\left(  X;\mathbb{Z}\right)  $ on%
\[
H_{2n-1}\left(  X;\mathbb{R}\right)  \times H^{2n-1}\left(  X;\mathbb{R}%
\right)  \times\frac{\mathbb{R}}{\mathbb{Z}}%
\]
by the rule that $\left(  m,n\right)  \in H_{2n-1}\left(  X;\mathbb{Z}\right)
\times H^{2n-1}\left(  X;\mathbb{Z}\right)  $ acts by%
\[
\left(  \alpha,\beta,r\right)  \mapsto\left(  \alpha+m,\beta+n,r+n\left(
\alpha\right)  +\beta\left(  m\right)  \right)  .
\]
Thus%
\[
d_{J\left(  X\right)  }d_{\left(  J\left(  X\right)  ^{\vee}\right)  }\left(
\beta\left(  \alpha\right)  \right)
\]
is the Chern class of the Poincar\'{e} bundle.
\end{definition}

The justification for the name in the above definition comes from the fact
that the restriction of the bundle to%
\[
J\left(  X\right)  \times\left\{  \beta\right\}
\]
corresponds to a $U\left(  1\right)  $-representation
\begin{gather*}
\pi_{1}\left(  J\left(  X\right)  ,0\right)  \rightarrow U\left(  1\right) \\
m\mapsto e^{2\pi i\beta\left(  m\right)  }%
\end{gather*}
and so to the flat holomorphic line bundle with the above holonomy group.

\section{Vanishing cycles, the topological Abel-Jacobi map and the height
pairing}

In this section we use without citation various standard operator identities
in \cite{We}, Chapters I and II. Let $\Sigma\subseteq X$ be any smoothly
embedded oriented submanifold of (real) dimension $2n-2$ which induces the
$0$-class in $H_{2n-2}\left(  X;\mathbb{Z}\right)  $. We call $\Sigma$ a
\textit{vanishing cycle}. For $\left\{  \Gamma\right\}  \in H_{2n-1}\left(
X,\Sigma;\mathbb{Z}\right)  $ with%
\[
\partial\Gamma=\Sigma,
\]
we consider $\Gamma$ as a $\left(  2n-1\right)  $-current. We endow $X$ with
the Riemannian metric induced from the Fubini-Study metric on $\mathbb{P}%
\left(  H^{0}\left(  \mathcal{O}_{X}\left(  1\right)  \right)  ^{\vee}\right)
$. We have that $\Delta_{d}=\Delta_{d^{C}}$ and the orthogonal decomposition%
\[
\widehat{\mathcal{A}}_{X}^{\ast}=\Delta_{d}G_{d}\widehat{\mathcal{A}}%
_{X}^{\ast}\oplus\mathcal{H}^{\ast}=\Delta_{d^{C}}G_{d^{C}}\widehat
{\mathcal{A}}_{X}^{\ast}\oplus\mathcal{H}^{\ast}%
\]
so that, as a (rectifiable) current,%
\begin{align}
\Sigma &  =dd^{\ast}G_{d}\left(  \Sigma\right) \nonumber\\
&  =dd^{\ast}G_{d}\left(  d^{C}\left(  d^{C}\right)  ^{\ast}G_{d}\left(
\Sigma\right)  +\left(  d^{C}\right)  ^{\ast}d^{C}G_{d}\left(  \Sigma\right)
\right) \label{gf}\\
&  =dd^{C}\left(  dd^{C}\right)  ^{\ast}G_{d}^{2}\left(  \Sigma\right)
-d\left(  dd^{C}\right)  ^{\ast}G_{d}^{2}\left(  d^{C}\left(  \Sigma\right)
\right)  .\nonumber
\end{align}
Notice that, if the current $\Sigma$ is $d^{C}$-closed, e.g. if $\Sigma$ is a
complex-analytic $\left(  n-1\right)  $-cycle, then%
\begin{equation}
\Sigma=dd^{C}\left(  dd^{C}\right)  ^{\ast}G_{d}^{2}\left(  \Sigma\right)  .
\label{anal}%
\end{equation}

In general we define%
\[
\omega_{\Sigma}=d^{\ast}G_{d}\left(  \Sigma\right)  .
\]
Thus%
\[
\left.  \omega_{\Sigma}\right\vert _{X-\left\vert \Sigma\right\vert }%
\]
is closed with integral periods and has integral $1$ against the boundary of a
small oriented ball transverse to $\Sigma$ and so, if $\Sigma$ is connected,
it can be thought of as the positively oriented generator of the cokernel of
the map%
\[
H^{2n-1}\left(  X;\mathbb{Z}\right)  \rightarrow H^{2n-1}\left(  X-\left\vert
\Sigma\right\vert ;\mathbb{Z}\right)  .
\]
In the general case we want the element of the cokernel represented in
$H^{0}\left(  \left\vert \Sigma\right\vert ;\mathbb{Z}\right)  $ by a cycle
consisting of a single point on each component of $\Sigma$.

Next we have the harmonic decomposition%
\begin{align}
\Gamma &  =d^{\ast}dG_{d}\left(  \Gamma\right)  +dd^{\ast}G_{d}\left(
\Gamma\right)  +\psi_{\Sigma}\label{Hodgedecomp}\\
&  =d^{\ast}G_{d}\left(  \Sigma\right)  +dd^{\ast}G_{d}\left(  \Gamma\right)
+\psi_{\Sigma}\nonumber\\
&  =\omega_{\Sigma}+dd^{\ast}G_{d}\left(  \Gamma\right)  +\psi_{\Sigma
}\nonumber
\end{align}
where $\psi_{\Sigma}\in\mathcal{H}^{2n-1}$. For any $\eta\in\mathcal{H}%
^{2n-1}$ we have%
\[%
{\displaystyle\int\nolimits_{X}}
\psi_{\Sigma}\wedge\eta=%
{\displaystyle\int\nolimits_{\Gamma}}
\eta.
\]
If, in $\left(  \ref{Hodgedecomp}\right)  $ $\Gamma$ is replaced by another
chain $\Gamma^{\prime}$ with harmonic projection $\psi_{\Sigma}^{\prime}$ and
such that $\partial\Gamma^{\prime}=\Sigma$, then $d\left(  \Gamma
-\Gamma^{\prime}\right)  =0$ so that%
\[
\Gamma-\Gamma^{\prime}=dd^{\ast}\left(  \Gamma-\Gamma^{\prime}\right)
+\psi_{\Sigma}-\psi_{\Sigma}^{\prime}%
\]
and so%
\[
\left(  \psi_{\Sigma}-\psi_{\Sigma}^{\prime}\right)  \in\mathcal{H}^{2n-1}%
\]
is an integral class.

\begin{definition}
The element $\left\{  h\mapsto%
{\displaystyle\int\nolimits_{X}}
\psi_{\Sigma}\wedge h\right\}  \in J\left(  X\right)  $ is called the
\textit{Abel-Jacobi image} of $\Sigma$, which we denote as $\alpha_{\Sigma}$.
The element $\left\{  \psi_{\Sigma}\right\}  \in J\left(  X\right)  ^{\vee}$
will be denoted as $\beta_{\Sigma}$.
\end{definition}

Notice that $\beta_{\Sigma}$ the image of $\alpha_{\Sigma}$ under the
isomorphism%
\[
J\left(  X\right)  \rightarrow J\left(  X\right)  ^{\vee}%
\]
induced by the Poincar\'{e} duality and that%
\[
\beta_{\Sigma}:H_{2n-1}\left(  X;\mathbb{Z}\right)  \rightarrow\frac
{\mathbb{R}}{\mathbb{Z}}\cong U\left(  1\right)
\]
exactly defines a flat line bundle $L_{\beta_{\Sigma}}$ on $J\left(  X\right)
$ with (flat) $U\left(  1\right)  $-connection.

Next consider the expression%
\[%
{\displaystyle\int\nolimits_{\Gamma^{\prime}}}
\omega_{\Sigma}.
\]
Fixing $\Sigma^{\prime}=\partial\Gamma^{\prime}$ the quantity $%
{\displaystyle\int\nolimits_{\Gamma^{\prime}}}
\omega_{\Sigma}$ is only well-defined modulo
\[
\left\{
{\displaystyle\int\nolimits_{\Theta^{\prime}}}
\omega_{\Sigma}:\left\{  \Theta^{\prime}\right\}  \in H_{2n-1}\left(
X-\left\vert \Sigma\right\vert ;\mathbb{Z}\right)  \right\}  .
\]
Since $d\left(  \omega_{\Sigma}\right)  =\Sigma$, the local periods around
$\left\vert \Sigma\right\vert $ are integers, hence $\left\{
{\displaystyle\int\nolimits_{\Gamma^{\prime}}}
\omega_{\Sigma}\right\}  \in\frac{\mathbb{R}}{\mathbb{Z}}$ is well-defined
modulo%
\[
G_{\Sigma}:=\left\{
{\displaystyle\int\nolimits_{\Theta^{\prime}}}
\omega_{\Sigma}:\left\{  \Theta^{\prime}\right\}  \in H_{2n-1}\left(
X;\mathbb{Z}\right)  \right\}  .
\]

\begin{definition}
\label{2}Given any two smoothly embedded oriented disjoint submanifolds
$\Sigma,\Sigma^{\prime}\subseteq X$ of (real) dimension $2n-2$ which each
induce the $0$-class in $H_{2n-2}\left(  X;\mathbb{Z}\right)  $, we define a
\textit{topological height pairing}%
\begin{equation}
\left(  \Sigma.\Sigma^{\prime}\right)  =%
{\displaystyle\int\nolimits_{\Gamma^{\prime}}}
\omega_{\Sigma}\in\frac{\mathbb{R}}{G_{\Sigma}+\mathbb{Z}}. \label{14}%
\end{equation}

\end{definition}

We extend both of the definitions just above to cycles%
\[
\Sigma=%
{\displaystyle\sum\nolimits_{i}}
m_{i}\Sigma_{i}%
\]
by linearity. In Definition \ref{2} $\omega_{\Sigma}$ is a current determined
by a smooth form on $X-\left\vert \Sigma\right\vert $. Thus we are allowed to
integrate against a chain $\Gamma^{\prime}$ whose support is disjoint from
$\left\vert \Sigma\right\vert $. In what follows, we will always assume such
disjointness. Also if we integrate the wedge product of two currents, we will
always restrict to the case in which the singular supports of the two currents
are disjoint. Compare this definition with that of \cite{Hai}. In the next two
Sections we use the height pair to define a lifting of the point $\left(
\alpha_{\Sigma},\beta_{\Sigma^{\prime}}\right)  $ to the fiber of the
Poincar\'{e} bundle above that point.

If $h$ denotes the harmonic projector, $%
{\displaystyle\int\nolimits_{X}}
\omega_{\Sigma}\wedge h\left(  \Theta^{\prime}\right)  =0$ and%
\[
\Theta^{\prime}-h\left(  \Theta^{\prime}\right)  =dd^{\ast}G_{d}\Theta
^{\prime}%
\]
is exact on $X-\left\vert \Theta^{\prime}\right\vert $. For the harmonic
decompositions%
\begin{align*}
\Gamma &  =dd^{\ast}G_{d}\left(  \Gamma\right)  +\omega_{\Sigma}+h\left(
\Gamma\right) \\
\Theta^{\prime}  &  =dd^{\ast}G_{d}\Theta^{\prime}+0+h\left(  \Theta^{\prime
}\right)
\end{align*}
we have%
\begin{align}
\Gamma\cdot\Theta^{\prime}  &  =-%
{\displaystyle\int\nolimits_{\Theta^{\prime}}}
\omega_{\Sigma}+%
{\displaystyle\int\nolimits_{\Gamma}}
h\left(  \Theta^{\prime}\right) \label{zee}\\
&  \in\mathbb{Z}\nonumber
\end{align}
as long as $\left\vert \Theta^{\prime}\right\vert $ is disjoint from
$\left\vert \Sigma\right\vert $. So $%
{\displaystyle\int\nolimits_{\Theta^{\prime}}}
\omega_{\Sigma}$ is well-defined. Indeed this is exactly B. Harris's height
pairing $\left(  \Sigma,\Theta^{\prime}\right)  $ in \cite{Har}.

To see what happens when $\Theta^{\prime}$ is moved to cross $\Gamma$
transversely, consider a one-parameter family $\Sigma^{\prime}\left(
t\right)  $ of $\Sigma^{\prime}$ parametrized by a complex parameter $t$ such
that%
\begin{equation}%
{\displaystyle\bigcup\nolimits_{\left\vert t\right\vert <\varepsilon}}
\Sigma^{\prime}\left(  t\right)  \label{inf1}%
\end{equation}
forms a smooth submanifold of $X$ fibered over the $t$-disk and meets $\Sigma$
transversally at a point of $\Sigma^{\prime}\left(  0\right)  $. Consider the
real-valued function%
\[
f\left(  t\right)  =\left(  \Sigma^{\prime}\left(  t\right)  .\Sigma\right)
=-%
{\displaystyle\int\nolimits_{\Gamma_{t}^{\prime}}}
\omega_{\Sigma}.
\]
The manifold $%
{\displaystyle\bigcup\nolimits_{\left\vert t\right\vert =\varepsilon/2}}
\Sigma^{\prime}\left(  t\right)  $ is homologous in $X-\left\vert
\Sigma\right\vert $ to the boundary of a small ball in $%
{\displaystyle\bigcup\nolimits_{\left\vert t\right\vert =\varepsilon/2}}
\Sigma^{\prime}\left(  t\right)  $ around the point $\Sigma^{\prime}\left(
0\right)  \cap\Sigma$. Since $d\omega_{\Sigma}=\Sigma$ it follows immediately
that%
\begin{equation}
f\left(  e^{2\pi i}t\right)  -f\left(  t\right)  =\pm1. \label{inf}%
\end{equation}
Thus the height pairing is well-defined as an element of $\frac{\mathbb{R}%
}{\mathbb{Z}}$.

Finally we compare $\left(  \Sigma.\Sigma^{\prime}\right)  $ and $\left(
\Sigma^{\prime}.\Sigma\right)  $, at least modulo $\mathbb{Z}$ as follows. As
in $\left(  \ref{zee}\right)  $ we have%
\begin{align}
\Gamma\cdot\Gamma^{\prime}  &  =%
{\displaystyle\int\nolimits_{\Gamma}}
\omega_{\Sigma^{\prime}}-%
{\displaystyle\int\nolimits_{\Gamma^{\prime}}}
\omega_{\Sigma}+%
{\displaystyle\int\nolimits_{\Gamma}}
h\left(  \Gamma^{\prime}\right) \label{zee'}\\
&  \in\mathbb{Z}\nonumber
\end{align}
for $\partial\Gamma=\Sigma$ and $\partial\Gamma^{\prime}=\Sigma^{\prime}$
since $\left\vert \Sigma\right\vert $ and $\left\vert \Sigma^{\prime
}\right\vert $ are assumed disjoint.

\section{Cohomology filtered by weight}

Again fix disjoint $\Sigma$ and $\Sigma^{\prime}$. In what follows we will
assume $\left\vert \Sigma\right\vert $ and $\left\vert \Sigma^{\prime
}\right\vert $ connected. The general case follows by linearity. Analogously
to \cite{Hai}, we wish to describe $\left(  \Sigma.\Sigma^{\prime}\right)  $
as the obstruction to splitting some kind of natural structure on real
cohomology. Since any $\left(  2n-1\right)  $-form restricts to zero on
$\Sigma$ and $\Sigma^{\prime}$ each harmonic form representing a class in
$H^{2n-1}\left(  X;\mathbb{Z}\right)  $ has a canonical lifting to
$H^{2n-1}\left(  X,\left\vert \Sigma\right\vert ;\mathbb{R}\right)  $ and to
$H^{2n-1}\left(  X,\left\vert \Sigma^{\prime}\right\vert ;\mathbb{R}\right)
$. Now $H^{2n-1}\left(  X-\left\vert \Sigma\right\vert ,\left\vert
\Sigma^{\prime}\right\vert ;\mathbb{R}\right)  $ is the $\left(  2n-1\right)
$-cohomology of the deRham complex%
\[
\left(  \mathcal{A}_{\left\vert \Sigma^{\prime}\right\vert }^{\ast-1}%
\oplus\mathcal{A}_{X-\left\vert \Sigma\right\vert }^{\ast},D\right)
\]
giving the mapping cone associated to the inclusion%
\[
\left\vert \Sigma^{\prime}\right\vert \rightarrow X-\left\vert \Sigma
\right\vert
\]
Since any $\left(  2n-1\right)  $-form restricts to zero on $\Sigma$, the
harmonic representatives $\mathcal{H}_{\mathbb{R}}^{2n-1}\left(  X\right)  $
for $H^{2n-1}\left(  X;\mathbb{R}\right)  $ and $\mathcal{H}_{\mathbb{R}%
}^{2n-2}\left(  \left\vert \Sigma^{\prime}\right\vert \right)  $ for
$H^{2n-2}\left(  \left\vert \Sigma^{\prime}\right\vert ;\mathbb{R}\right)  $
then allow us to define a distinguished isomorphism%
\begin{equation}
H^{2n-1}\left(  X-\left\vert \Sigma\right\vert ,\Sigma^{\prime};\mathbb{R}%
\right)  \cong\mathbb{R}\cdot\omega_{\Sigma}\oplus\mathcal{H}_{\mathbb{R}%
}^{2n-1}\left(  X\right)  \oplus\mathcal{H}_{\mathbb{R}}^{2n-2}\left(
\left\vert \Sigma^{\prime}\right\vert \right)  . \label{distiso}%
\end{equation}

Now
\[
H_{2n-1}\left(  X-\left\vert \Sigma\right\vert ,\left\vert \Sigma^{\prime
}\right\vert ;\mathbb{R}\right)  \cong H^{2n-1}\left(  X-\left\vert
\Sigma\right\vert ,\Sigma^{\prime};\mathbb{R}\right)  ^{\vee}%
\]
So via $\left(  \ref{distiso}\right)  $ we obtain
\[
H_{2n-1}\left(  X-\left\vert \Sigma\right\vert ,\left\vert \Sigma^{\prime
}\right\vert ;\mathbb{R}\right)  \cong\left(  \mathbb{R}\cdot\omega_{\Sigma
}\right)  ^{\vee}\oplus\mathcal{H}_{\mathbb{R}}^{2n-1}\left(  X\right)
^{\vee}\oplus\mathcal{H}_{\mathbb{R}}^{2n-2}\left(  \left\vert \Sigma^{\prime
}\right\vert \right)  ^{\vee}%
\]
and, writing%
\begin{align*}
M_{\mathbb{R}}  &  :=H_{2n-1}\left(  X-\left\vert \Sigma\right\vert
,\left\vert \Sigma^{\prime}\right\vert ;\mathbb{Z}\right) \\
M_{\mathbb{R}}^{\vee}  &  :=\mathbb{R}\cdot\omega_{\Sigma}\oplus
\mathcal{H}_{\mathbb{R}}^{2n-1}\left(  X\right)  \oplus\mathcal{H}%
_{\mathbb{R}}^{2n-2}\left(  \left\vert \Sigma^{\prime}\right\vert \right)  ,
\end{align*}
we have an induced injection%
\begin{gather}
M_{\mathbb{Z}}\rightarrow M_{\mathbb{R}}\label{11}\\
\left(  \partial\Delta_{\Sigma},\left\{  \Gamma_{k}\right\}  ,\Gamma^{\prime
}\right)  \mapsto\left(
{\displaystyle\int\nolimits_{\partial\Delta_{\Sigma}}}
,\left\{
{\displaystyle\int\nolimits_{\Gamma_{k}}}
\right\}  ,%
{\displaystyle\int\nolimits_{\Gamma^{\prime}}}
\right) \nonumber
\end{gather}
where $\Gamma^{\prime}$ is supported away from $\left\vert \Sigma\right\vert
\cup\left\vert \Sigma^{\prime}\right\vert $ and $\left\{  \Gamma_{k}\right\}
$ is an integral basis of $H_{2n-1}\left(  X;\mathbb{Z}\right)  $ also
supported away from $\left\vert \Sigma\right\vert \cup\left\vert
\Sigma^{\prime}\right\vert $ and $\Delta_{\Sigma}$ is a small real oriented
$2n$-ball meeting $\left\vert \Sigma\right\vert $ transversely at one point.

Finally we can filter the $\mathbb{Z}$-module $M$ via%
\begin{gather}
W_{2}=M\label{18}\\
W_{1}=\mathrm{ker}\left(  H_{2n-1}\left(  X-\left\vert \Sigma\right\vert
,\left\vert \Sigma^{\prime}\right\vert ;\mathbb{Z}\right)  \overset{\partial
}{\longrightarrow}H_{2n-2}\left(  \left\vert \Sigma^{\prime}\right\vert
;\mathbb{Z}\right)  \right) \nonumber\\
W_{0}=\mathrm{image}\left(  H_{0}\left(  \left\vert \Sigma\right\vert
;\mathbb{Z}\right)  \rightarrow H_{2n-1}\left(  X-\left\vert \Sigma\right\vert
;\mathbb{Z}\right)  \rightarrow H_{2n-1}\left(  X-\left\vert \Sigma\right\vert
,\left\vert \Sigma^{\prime}\right\vert ;\mathbb{Z}\right)  \right) \nonumber
\end{gather}
give a three-step weight filtration to $H_{2n-1}\left(  X-\left\vert
\Sigma\right\vert ,\left\vert \Sigma^{\prime}\right\vert ;\mathbb{Z}\right)  $.

\section{Weight filtrations of real vector spaces defined over the integers}

The last part of the previous section, in particular the mapping $\left(
\ref{11}\right)  $, motivates the following real analogue to (a special case
of) the weight filtration in the theory of mixed Hodge structures.

\begin{definition}
Let $M_{\mathbb{Z}}$ be a free $\mathbb{Z}$-module of rank $r+2$ with a fixed
filtration%
\[
M_{\mathbb{Z}}=W_{2}\supsetneqq W_{1}\supsetneqq W_{0}\supsetneqq\left\{
0\right\}
\]
such that $W_{2}/W_{1}$ and $W_{0}$ are free of rank one and $W_{1}/W_{0}$ is
free of rank $r$. An integral structure $\zeta$ on
\[
M_{\mathbb{R}}=M_{\mathbb{Z}}\otimes\mathbb{R}%
\]
is a filtration preserving morphism of $\mathbb{Z}$-modules
\[
M_{\mathbb{Z}}\rightarrow M_{\mathbb{R}}%
\]
such that the induced maps%
\[
\frac{W_{i}}{W_{i-1}}\rightarrow\frac{W_{i}\otimes\mathbb{R}}{W_{i-1}%
\otimes\mathbb{R}}%
\]
are the tautological inclusions obtained by tensoring $\frac{W_{i}}{W_{i-1}}$
with $\mathbb{R}$.
\end{definition}

For example, the inclusion%
\begin{equation}
H_{2n-1}\left(  X-\left\vert \Sigma\right\vert ,\left\vert \Sigma^{\prime
}\right\vert ;\mathbb{Z}\right)  \rightarrow H_{2n-1}\left(  X-\left\vert
\Sigma\right\vert ,\left\vert \Sigma^{\prime}\right\vert ;\mathbb{R}\right)
\label{intstr}%
\end{equation}
with weight filtration $\left(  \ref{18}\right)  $ gives an integral structure
via the isomorphism $\left(  \ref{distiso}\right)  $.

\begin{definition}
Two \textit{integral} \textit{structures} $\zeta$ and $\zeta^{\prime}$ on
$M_{\mathbb{Z}}\otimes\mathbb{R}$ are equivalent if%
\[
\zeta^{\prime}=\zeta\circ\mu
\]
for some filtration-preserving isomorphism%
\[
\mu:M_{\mathbb{Z}}\rightarrow M_{\mathbb{Z}}.
\]

\end{definition}

Notice that the restriction of an integral structure $\zeta$ to $W_{1}$ is
exactly the same thing as a homomorphism%
\[
\beta_{\zeta}:\frac{W_{1}}{W_{0}}\rightarrow W_{0}\otimes\mathbb{R}%
\]
and that equivalent integral structures $\zeta$ and $\zeta^{\prime}$ induce
the same homomorphism%
\[
\beta_{\zeta}=\beta_{\zeta^{\prime}}:\frac{W_{1}}{W_{0}}\rightarrow\frac
{W_{0}\otimes\mathbb{R}}{W_{0}}.
\]
In the same way the data of an integral structure $\zeta$ exactly induces on
$M_{\mathbb{Z}}/W_{0}$ a homomorphism%
\[
\alpha_{\zeta}:\frac{W_{2}}{W_{1}}\rightarrow\frac{W_{1}\otimes\mathbb{R}%
}{W_{0}\otimes\mathbb{R}}%
\]
and equivalent integral structures $\zeta$ and $\zeta^{\prime}$ induce the
same homomorphism%
\[
\alpha_{\zeta}=\alpha_{\zeta^{\prime}}:\frac{W_{2}}{W_{1}}\rightarrow
\frac{W_{1}\otimes\mathbb{R}}{\left(  W_{0}\otimes\mathbb{R}\right)  +W_{1}}.
\]

\begin{theorem}
For the integral structure $\zeta$ given in $\left(  \ref{intstr}\right)  $,%
\begin{align*}
\alpha_{\zeta}  &  =\alpha_{\Sigma^{\prime}}\in J\left(  X\right) \\
\beta_{\zeta}  &  =\beta_{\Sigma}\in J\left(  X\right)  ^{\vee}.
\end{align*}
The set of equivalence classes of integral structures that induce the pair
$\left(  \alpha_{\Sigma^{\prime}},\beta_{\Sigma}\right)  $ is naturally
isomorphic to the fiber of the topological Poincar\'{e} bundle at $\left(
\alpha_{\Sigma^{\prime}},\beta_{\Sigma}\right)  $.

The integral structure $\left(  \ref{intstr}\right)  $ is determined by
$\left(  \alpha_{\Sigma^{\prime}},\beta_{\Sigma}\right)  $ and the topological
height pairing $\left(  \Sigma.\Sigma^{\prime}\right)  $. Thus $\left(
\Sigma.\Sigma^{\prime}\right)  $ gives a distinguished lifting of the point
$\left(  \alpha_{\Sigma^{\prime}},\beta_{\Sigma}\right)  $ to the fiber of the
(topological) Poincar\'{e} bundle at $\left(  \alpha_{\Sigma^{\prime}}%
,\beta_{\Sigma}\right)  $.
\end{theorem}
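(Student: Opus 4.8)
The plan is to choose, adapted to the weight filtration \eqref{18}, the integral basis $e_0,f_1,\dots,f_r,g$ of $M_{\mathbb{Z}}$ represented by $\partial\Delta_\Sigma$, the cycles $\Gamma_k$, and $\Gamma'$, together with the dual cohomology basis $\omega_\Sigma$, harmonic representatives $\eta_k$ of the basis of $H^{2n-1}(X;\mathbb{R})$ dual to $\{\Gamma_k\}$, and a harmonic generator on $|\Sigma'|$ furnished by \eqref{distiso}. The period map \eqref{11} is then recorded by the pairing matrix with entries $\int_c\phi$, which is block lower-triangular and unipotent: its only nontrivial entries are $\int_{\Gamma_k}\omega_\Sigma=:b_k$, $\int_{\Gamma_k}\eta_j=\delta_{kj}$, $\int_{\Gamma'}\eta_j=:a_j$, and $\int_{\Gamma'}\omega_\Sigma=:c$. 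Identifying $M_{\mathbb{R}}$ with $M_{\mathbb{Z}}\otimes\mathbb{R}$ through this cohomology basis, the integral structure \eqref{intstr} becomes $\zeta(e_0)=e_0$, $\zeta(f_k)=f_k+b_k e_0$, $\zeta(g)=g+\sum_j a_j f_j+c\,e_0$, which is automatically tautological on graded pieces; by definition $\beta_\zeta(f_k)=b_k\bmod\mathbb{Z}$ and $\alpha_\zeta(g)=(a_j)\bmod\mathbb{Z}$.

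First I would identify $\beta_\zeta$ and $\alpha_\zeta$. Applying the reciprocity \eqref{zee} with $\Theta'=\Gamma_k$ and using $\int_\Gamma h(\Gamma_k)=\int_X\psi_\Sigma\wedge h(\Gamma_k)=\int_{\Gamma_k}\psi_\Sigma$ together with the integrality of $\Gamma\cdot\Gamma_k$ gives $b_k\equiv\int_{\Gamma_k}\psi_\Sigma=\beta_\Sigma(\Gamma_k)\pmod{\mathbb{Z}}$, so $\beta_\zeta=\beta_\Sigma$. Dually, $a_j=\int_{\Gamma'}\eta_j=\int_X\psi_{\Sigma'}\wedge\eta_j$, which are exactly the periods defining the Abel-Jacobi image, so $\alpha_\zeta=\alpha_{\Sigma'}$. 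This proves the first display of the theorem.

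Next I would analyze the equivalences. A filtration-preserving $\mu\colon M_{\mathbb{Z}}\to M_{\mathbb{Z}}$ acts on the adapted basis by $e_0\mapsto\pm e_0$, $f_k\mapsto\sum_j P_{jk}f_j+q_k e_0$ with $P\in GL_r(\mathbb{Z})$ and $q_k\in\mathbb{Z}$, and $g\mapsto\pm g+\sum_k s_k f_k+t\,e_0$ with $s_k,t\in\mathbb{Z}$. Computing $\zeta\circ\mu$ shows that the integral parts of $(b_k)$ and $(a_j)$ are absorbed, so $\alpha_\zeta,\beta_\zeta$ depend only on the equivalence class, while the extension parameter transforms as $c\mapsto c+\sum_k s_k b_k+t$ (up to the sign from $e_0\mapsto\pm e_0$). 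Reading $s=(s_k)$ as the change $m=\sum_k s_k[\Gamma_k]\in H_{2n-1}(X;\mathbb{Z})$ of integral lift of $\alpha_{\Sigma'}$, and $q=(q_k)$ as the change of lift of $\beta_\Sigma$, one sees $\sum_k s_k b_k=\beta_\Sigma(m)$, that the analogous $\beta$-shift contributes $n(\alpha_{\Sigma'})$, and that $t$ gives the pure $\mathbb{R}/\mathbb{Z}$ periodicity. This is precisely the cocycle $(\alpha,\beta,r)\mapsto(\alpha+m,\beta+n,r+n(\alpha)+\beta(m))$ of Definition \ref{PB}; hence, once lifts of $(\alpha_{\Sigma'},\beta_\Sigma)$ are fixed, $c\bmod\mathbb{Z}$ is a well-defined fiber coordinate transforming by the Poincar\'e gluing under change of lifts, which yields the asserted natural isomorphism between equivalence classes of integral structures inducing $(\alpha_{\Sigma'},\beta_\Sigma)$ and the fiber of the Poincar\'e bundle.

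Finally, for the geometric structure \eqref{intstr} the period map evaluates the extension parameter as $c=\int_{\Gamma'}\omega_\Sigma=(\Sigma.\Sigma')$. Its ambiguity $G_\Sigma=\{\int_{\Theta'}\omega_\Sigma\}$ from \eqref{14} is exactly the effect of changing the integral lift of $\alpha_{\Sigma'}$ by $\Theta'\in H_{2n-1}(X;\mathbb{Z})$, and by the computation of the second step $\int_{\Theta'}\omega_\Sigma\equiv\beta_\Sigma(\Theta')\pmod{\mathbb{Z}}$; thus the $G_\Sigma$-indeterminacy of the height matches the $\alpha$-lift indeterminacy of the fiber coordinate. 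Therefore $(\alpha_{\Sigma'},\beta_\Sigma)$ together with $(\Sigma.\Sigma')$ determines \eqref{intstr} up to equivalence, and $(\Sigma.\Sigma')$ is the distinguished lift of $(\alpha_{\Sigma'},\beta_\Sigma)$ to the Poincar\'e bundle. The main obstacle I anticipate is the equivalence analysis of the third step: verifying that the transformation law of $c$ under the full automorphism group, including the $GL_r(\mathbb{Z})$ action on $W_1/W_0$ and the sign ambiguities on the rank-one graded pieces, matches the Poincar\'e cocycle exactly, with correct signs and without any spurious further quotient.
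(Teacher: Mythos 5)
Your treatment of the first claim---reading $\beta_{\zeta}$ and $\alpha_{\zeta}$ off the period matrix of \eqref{11} in the basis furnished by \eqref{distiso}, and using \eqref{zee} to convert $\int_{\Gamma_{k}}\omega_{\Sigma}$ into $\beta_{\Sigma}(\Gamma_{k})$ modulo $\mathbb{Z}$---is correct and coincides with the paper's own argument. Where you diverge is the second claim: you analyze the abstract group of filtration-preserving automorphisms of $M_{\mathbb{Z}}$, whereas the paper realizes the equivalences geometrically through the monodromy action of $\mathrm{Diff}_{0}(X,|\Sigma|,|\Sigma^{\prime}|)$ on the period matrix. Your route is in principle the more direct one for the statement as literally phrased, and the worry you flag at the end about the $GL_{r}(\mathbb{Z})$ block and the signs on the rank-one graded pieces is in fact a non-issue: for $\zeta\circ\mu$ to be again an \emph{integral structure} it must induce the tautological inclusions on every $W_{i}/W_{i-1}$, and since $\zeta$ already does, $\mu$ is forced to be the identity on all graded pieces; only the unipotent automorphisms survive.

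The genuine gap is in the cocycle matching. With $\mu(e_{0})=e_{0}$, $\mu(f_{k})=f_{k}+q_{k}e_{0}$, $\mu(g)=g+\sum_{k}s_{k}f_{k}+te_{0}$, your own formulas give $\zeta\circ\mu(g)=g+\sum_{j}(a_{j}+s_{j})f_{j}+(c+\sum_{k}s_{k}b_{k}+t)e_{0}$: the extension datum transforms by $c\mapsto c+\beta_{\Sigma}(m)+t$ under the shift of the $\alpha$-lift by $m=\sum_{k}s_{k}[\Gamma_{k}]$, while the shift $(q_{k})$ of the $\beta$-lift contributes \emph{nothing} to $c$. Your sentence ``the analogous $\beta$-shift contributes $n(\alpha_{\Sigma^{\prime}})$'' is therefore not a consequence of your setup, and without that term your equivalence classes form a torsor for the cocycle $(m,n)\mapsto\beta(m)$ rather than for the cocycle $(m,n)\mapsto n(\alpha)+\beta(m)$ of Definition \ref{PB}; these are not the same gluing data, so the asserted natural isomorphism with the Poincar\'{e} fiber is not yet established. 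The missing $n(\alpha_{\Sigma^{\prime}})$ cannot be produced by lattice automorphisms of $M_{\mathbb{Z}}$ at all: it arises from the residual ambiguity in the marking of $M_{\mathbb{R}}$ itself, namely that the class of $\omega_{\Sigma}$ on $X-|\Sigma|$ is distinguished only modulo the image of $H^{2n-1}(X;\mathbb{Z})$, and replacing $\omega_{\Sigma}$ by $\omega_{\Sigma}+h_{n}$ for an integral harmonic class $n$ shifts $\int_{\Gamma_{k}}\omega_{\Sigma}$ by $n(\Gamma_{k})$ and $\int_{\Gamma^{\prime}}\omega_{\Sigma}$ by $\int_{\Gamma^{\prime}}h_{n}\equiv n(\alpha_{\Sigma^{\prime}})$. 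This is exactly what the paper's diffeomorphism-monodromy argument is designed to capture, since a loop of diffeomorphisms changes both the lattice marking and the comparison with the fixed harmonic model. To complete your proof you must either enlarge your notion of equivalence to include this change of splitting in \eqref{distiso}, or follow the paper and phrase the equivalences through the geometric action.
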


\begin{proof}
Recall from Definition \ref{PB} that the Poincar\'{e} circle bundle is defined
by the pasting rule%
\[
\left(  \alpha,\beta,r\right)  \mapsto\left(  \alpha+m,\beta+n,r+n\left(
\alpha\right)  +\beta\left(  m\right)  \right)
\]
under the action of $\left(  m,n\right)  \in H_{2n-1}\left(  X;\mathbb{Z}%
\right)  \times H^{2n-1}\left(  X;\mathbb{Z}\right)  $ on $H_{2n-1}\left(
X;\mathbb{R}\right)  \times H^{2n-1}\left(  X;\mathbb{R}\right)  $. Let
$\left\{  h_{j}\right\}  $ be a basis of $\mathcal{H}_{\mathbb{Z}}^{2n-1}$,
the module of harmonic forms with integral periods. That%
\[
\beta_{\zeta}=\beta_{\Sigma}%
\]
derives from the identity $\left(  \ref{zee}\right)  $%
\[%
{\displaystyle\int\nolimits_{\Theta^{\prime}}}
\omega_{\Sigma}=%
{\displaystyle\int\nolimits_{X}}
\psi_{\Sigma}\wedge h\left(  \Theta^{\prime}\right)  -\Gamma\cdot
\Theta^{\prime}.
\]
In terms of the basis%
\[
\omega_{\Sigma},\left\{  h_{j}\right\}  ,\varepsilon_{\Sigma^{\prime}}%
\]
for $M_{\mathbb{R}}^{\vee}$ in $\left(  \ref{distiso}\right)  $ the integral
structure $\left(  \ref{11}\right)  $ is given by the period matrix%
\[
\left(
\begin{array}
[c]{ccc}%
{\displaystyle\int\nolimits_{\partial\Delta_{\Sigma}}}
\omega_{\Sigma} & \left(  0,\ldots,0\right)  & 0\\
\left\{
{\displaystyle\int\nolimits_{\Gamma_{k}}}
\omega_{\Sigma}\right\}  & \left\{
{\displaystyle\int\nolimits_{\Gamma_{k}}}
h_{j}\right\}  &
\begin{array}
[c]{c}%
0\\
\ldots\\
0
\end{array}
\\%
{\displaystyle\int\nolimits_{\Gamma^{\prime}}}
\omega_{\Sigma} &
{\displaystyle\int\nolimits_{\Gamma^{\prime}}}
h_{j} &
{\displaystyle\int\nolimits_{\partial\Gamma^{\prime}}}
\varepsilon_{\Sigma^{\prime}}%
\end{array}
\right)
\]
where the `diagonal' entries in the above matrix are integers, the $\left(
3,2\right)  $-entry is $\alpha_{\Sigma^{\prime}}$, $\left(  2,1\right)
$-entry is $\beta_{\Sigma}$, and the $\left(  3,1\right)  $-entry is $\left(
\Sigma.\Sigma^{\prime}\right)  $. The diffeomorphism group%
\[
\mathrm{Diff}\left(  X,\left\vert \Sigma\right\vert ,\left\vert \Sigma
^{\prime}\right\vert \right)
\]
has a subgroup
\[
\mathrm{Diff}_{0}\left(  X,\left\vert \Sigma\right\vert ,\left\vert
\Sigma^{\prime}\right\vert \right)  :=\mathrm{ker}\left(  \mathrm{Diff}\left(
X,\left\vert \Sigma\right\vert ,\left\vert \Sigma^{\prime}\right\vert \right)
\rightarrow\mathrm{Aut}\left(  H^{2n-1}\left(  X;\mathbb{Z}\right)  \right)
\right)  .
\]
$\mathrm{Diff}_{0}\left(  X,\left\vert \Sigma\right\vert ,\left\vert
\Sigma^{\prime}\right\vert \right)  $ acts on the above matrix in such a way
that the monodromy action associated to path $\gamma$ in $\mathrm{Diff}%
_{0}\left(  X\right)  $ beginning at the identity diffeomorphism and ending in
$\mathrm{Diff}_{0}\left(  X,\left\vert \Sigma\right\vert ,\left\vert
\Sigma^{\prime}\right\vert \right)  $ acts trivially on the `diagonal' entries
and as%
\[
\left(  \alpha_{\Sigma},\beta_{\Sigma^{\prime}},r\right)  \mapsto\left(
\alpha_{\Sigma}+m_{\gamma},\beta_{\Sigma^{\prime}}+n_{\gamma},r+n_{\gamma
}\left(  \alpha_{\Sigma}\right)  +\beta_{\Sigma^{\prime}}\left(  m_{\gamma
}\right)  \right)  .
\]
But modulo $\mathbb{Z}$ the action of $\gamma$ on the entry $\left(
3,1\right)  $ is exactly%
\[%
{\displaystyle\int\nolimits_{\Gamma^{\prime}}}
\omega_{\Sigma}\mapsto%
{\displaystyle\int\nolimits_{\Gamma^{\prime}}}
\omega_{\Sigma}+n_{\gamma}\left(  \alpha_{\Sigma}\right)  +\beta
_{\Sigma^{\prime}}\left(  m_{\gamma}\right)  .
\]

Said otherwise, the action of $\gamma$ takes the integral structure%
\[
\zeta:M_{\mathbb{Z}}\rightarrow M_{\mathbb{R}}%
\]
to the equivalent integral structure%
\[
M_{\mathbb{Z}}\overset{\gamma_{\ast}}{\longrightarrow}M_{\mathbb{Z}}%
\overset{\zeta}{\longrightarrow}M_{\mathbb{R}}.
\]

\end{proof}

\section{Topological Jacobi inversion}

Finally, as in the introduction, consider two primitive classes $\eta
,\eta^{\prime}\in H^{2n}\left(  W;\mathbb{Z}\right)  $ on a complex manifold
$W$. Let $Z,Z^{\prime}\subseteq W$ be real $2n$-manifolds representing their
Poincar\'{e} duals. Let $\mathbb{L}\subset\mathbb{P}$ be a generic line
(Lefschetz pencil) with $\mathbb{L}^{sm}=\mathbb{L}\cap\mathbb{P}^{sm}$ and
let%
\[
\tilde{Z},\tilde{Z}^{\prime}%
\]
be liftings of $Z,Z^{\prime}$ into $X_{\mathbb{L}}$ to manifolds that in
general position with respect to the fibration%
\[
\chi:X_{\mathbb{L}}\rightarrow\mathbb{L}%
\]
and to each other. Thus $\tilde{Z}$ and $\tilde{Z}^{\prime}$ intersect
transversely over $m$ distinct points $p_{j}\in\mathbb{L}^{sm}$. Define%
\begin{align*}
\Sigma_{p}  &  =X_{p}\cap\tilde{Z}\\
\Sigma_{p}^{\prime}  &  =X_{p}\cap\tilde{Z}^{\prime}.
\end{align*}
(Notice that $\Sigma_{p}$ or $\Sigma_{p}^{\prime}$ may be slightly singular,
but only at a finite number of points of $\mathbb{L}^{sm}-\cup_{j}\left\{
x_{j}\right\}  $. These singularities will not affect any of the above
computations since for them $\Sigma_{p}$ and $\Sigma_{p}^{\prime}$ need only
be rectifiable currents at these points.) The computation leading to $\left(
\ref{inf}\right)  $ then shows that the monodromy of the height pairing
computes the local obstruction to separating $\Sigma_{p_{j}}$ and
$\Sigma_{p_{j}}^{\prime}$, that is, the local intersection number of
$\tilde{Z}$ and $\tilde{Z}^{\prime}$.

\part{Analytic setting}

\section{The case in which $\Sigma$ is of type $\left(  n,n\right)  $}

From now on suppose that $\Sigma$ and $\Sigma^{\prime}$ are analytic $\left(
n-1\right)  $-cycles. Although the choice will not matter for the purposes of
what follows, we will endow $J\left(  X\right)  $ with the (Weil) complex
structure given by the operator%
\[
-C:H^{2n-1}\left(  X;\mathbb{R}\right)  ^{\vee}\rightarrow H^{2n-1}\left(
X;\mathbb{R}\right)  ^{\vee}.
\]

In this case, the topological height pairing $\left(  \Sigma.\Sigma^{\prime
}\right)  $ is the real part of a complex-valued function%
\[%
{\displaystyle\int\nolimits_{\Gamma^{\prime}}}
\left(  d^{\ast}+i\left(  d^{C}\right)  ^{\ast}\right)  G\left(
\Sigma\right)  =\frac{1}{2}%
{\displaystyle\int\nolimits_{\Gamma^{\prime}}}
\overline{\partial}^{\ast}G\left(  \Sigma\right)  .
\]
Referring to $\left(  \ref{anal}\right)  $ the imaginary part of this function
is the classical height pairing as defined by Beilinson, Bolch and
Gillet-Soul\'{e}. We denote this last pairing as $\left[  \Sigma
.\Sigma^{\prime}\right]  $. Finally we call%
\[
\left\langle \Sigma.\Sigma^{\prime}\right\rangle :=\left(  \Sigma
.\Sigma^{\prime}\right)  +i\left[  \Sigma.\Sigma^{\prime}\right]
\]
the \textit{holomorphic height pairing} for reasons explained by the next Lemma.

Suppose now that%
\[
\Sigma^{\prime}\left(  t\right)  =q_{\ast}p^{\ast}\left(  \left\{  t\right\}
\right)  -q_{\ast}p^{\ast}\left(  \left\{  t_{0}\right\}  \right)
\]
for a flat family%
\[%
\begin{array}
[c]{ccc}%
I_{T}\subseteq T\times X & \overset{q}{\longrightarrow} & X\\
\downarrow^{p} &  & \\
T &  &
\end{array}
\]
of effective analytic $\left(  n-1\right)  $-cycles where $T$ is a smooth,
connected projective manifold and $t,t_{0}\in T$. In what follows we will want
to handle the case in which $\mathrm{dim}\left(  \left\vert \Sigma\right\vert
\cap\left\vert \Sigma^{\prime}\left(  t\right)  \right\vert \right)  >0$ for
some $t\in T$ and still be able to compute an incidence divisor for $\Sigma$
on $T$, at least up to rational equivalence. We do this by using \cite{FL} to
deform $\Sigma$ in its rational equivalence class to a cycle $\hat{\Sigma}$
such that
\[
\left\{  t:\mathrm{dim}\left(  \left\vert \hat{\Sigma}\right\vert
\cap\left\vert \Sigma^{\prime}\left(  t\right)  \right\vert \right)
>0\right\}
\]
is of codimension at least $2$ in $T$. Then the incidence divisor%
\[
p_{\ast}\left(  q^{\ast}\left(  \hat{\Sigma}\right)  \cdot I_{T}\right)
\]
is well-defined and its rational equivalence class is independent of the
choice of rationally equivalent deformation $\hat{\Sigma}$.

To check that%
\[
\alpha\left(  \Sigma\left(  t\right)  -\Sigma\left(  t_{0}\right)  \right)
\]
is holomorphic in $t$ at a general element $t\in T$, we may suppose that
$\Sigma\left(  t\right)  $ is smooth and irreducible there and that $T$ is a
disk of complex dimension one. So, for any $h^{\prime}\in\mathcal{H}%
^{2n-1}=\mathcal{H}^{2n-1}\left(  X;\mathbb{R}\right)  $,%
\[
\left.  h^{\prime}\right\vert _{I_{T}}=\overline{h^{n-1,n}}+h^{n-1,n}%
\]
has pure type $\left(  n,n-1\right)  +\left(  n-1,n\right)  $. Since $I_{T}$
is locally the product of $T$ with an open set on $\Sigma\left(  t\right)  $
the form $h^{\prime}$ can be written as
\[
\omega\wedge dt+\omega\wedge\overline{dt}+\beta\wedge dt\wedge\overline{dt}%
\]
with $\omega$ real of type $\left(  n-1,n-1\right)  $. Then, by the Cartan-Lie
formula,
\begin{align*}
\overline{\frac{\partial}{\partial t}}%
{\displaystyle\int\nolimits_{\Gamma\left(  t\right)  }}
\left(  h^{\prime}-iCh^{\prime}\right)   &  =%
{\displaystyle\int\nolimits_{\Sigma\left(  t\right)  }}
\left\langle \left.  \overline{\frac{\partial}{\partial t}}\right\vert \left(
h^{\prime}-iCh^{\prime}\right)  \right\rangle \\
&  =%
{\displaystyle\int\nolimits_{\Sigma\left(  t\right)  }}
\omega-\omega=0.
\end{align*}
In particular we have the following.

So in particular, the image of $CH_{alg}^{n}\left(  X\right)  $ is a complex
subtorus $A_{X}$ of the Weil complex torus $J\left(  X\right)  $ (and of the
Griffiths complex torus $J\left(  X\right)  $).

\begin{lemma}
\label{holo'}For any $h^{\prime}\in\mathcal{H}^{2n-1}$, the image%
\[
p_{\ast}\left(  \left.  q^{\ast}\left(  h^{\prime}\right)  \right\vert
_{I_{T}}\right)
\]
is the sum of a globally defined holomorphic one-form on $T$ and its conjugate.
\end{lemma}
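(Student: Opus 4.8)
The plan is to identify the $1$-form $\Phi:=p_{\ast}\left(\left.q^{\ast}(h')\right\vert_{I_{T}}\right)$ explicitly through the Hodge type of $h'$, to prove holomorphy of its $(1,0)$-part by commuting $\overline{\partial}$ past the fiber integral, and finally to extend the resulting holomorphic form across the discriminant. Two structural facts drive the argument. First, since $X$ is compact K\"{a}hler and $h'\in\mathcal{H}^{2n-1}$ is harmonic, each Hodge component $(h')^{a,b}$ is again harmonic, hence both $\partial$- and $\overline{\partial}$-closed; and since $h'$ is real, $(h')^{n-1,n}=\overline{(h')^{n,n-1}}$. Second, over the Zariski-dense open set $T^{\circ}\subseteq T$ on which $p$ is a proper holomorphic submersion with smooth fibers $\Sigma'(t)$ of complex dimension $n-1$, fiber integration $p_{\ast}$ commutes with $\partial$ and $\overline{\partial}$: the fibers have even real dimension $2(n-1)$ and no boundary, so the base-directional part of $\overline{\partial}$ passes through the integral by differentiation under the integral sign while the fiber-directional part integrates to zero by Stokes.

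I would first carry out the type bookkeeping. Working locally as $U\times V$ with $U\subseteq T$ and $V$ open in a fiber, $p_{\ast}$ annihilates every term that is not of top type $(n-1,n-1)$ along $V$; a component $q^{\ast}(h')^{a,b}$ with $a+b=2n-1$ therefore pushes forward to a form of base type $\left(a-(n-1),\,b-(n-1)\right)$, which has total base degree $1$ only for $(a,b)=(n,n-1)$ or $(n-1,n)$. Consequently $\Phi^{1,0}=p_{\ast}\left(\left.q^{\ast}(h')^{n,n-1}\right\vert_{I_{T}}\right)$ and $\Phi^{0,1}=p_{\ast}\left(\left.q^{\ast}(h')^{n-1,n}\right\vert_{I_{T}}\right)$, while by reality $\Phi^{0,1}=\overline{\Phi^{1,0}}$. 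Thus $\Phi=\Phi^{1,0}+\overline{\Phi^{1,0}}$ is automatically a $1$-form plus its conjugate, matching the local expression $\omega\wedge dt+\omega\wedge\overline{dt}+\beta\wedge dt\wedge\overline{dt}$ of the preamble, whose $dt\wedge\overline{dt}$-term dies under $p_{\ast}$ for degree reasons.

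The heart of the matter is holomorphy of $\Phi^{1,0}$ on $T^{\circ}$, which is essentially the intrinsic form of the Cartan--Lie computation above. Since pullback by the holomorphic map $q$ commutes with $\overline{\partial}$, and fiber integration commutes with $\overline{\partial}$ on $T^{\circ}$, I get
\[
\overline{\partial}\,\Phi^{1,0}=p_{\ast}\left(\left.q^{\ast}\,\overline{\partial}(h')^{n,n-1}\right\vert_{I_{T}}\right)=0,
\]
because $(h')^{n,n-1}$ is $\overline{\partial}$-closed. Hence $\Phi^{1,0}$ is a $\overline{\partial}$-closed $(1,0)$-form, i.e. a holomorphic $1$-form on $T^{\circ}$. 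Note that mere closedness of $h'$ would only give $\overline{\partial}(h')^{n,n-1}=-\partial(h')^{n-1,n}$; it is the K\"{a}hler identities on $X$, forcing each harmonic Hodge component to be $\overline{\partial}$-closed, that make this step work.

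The remaining, and I expect principal, difficulty is the extension of $\Phi^{1,0}$ across $T\setminus T^{\circ}$. The complement of $T^{\circ}$ is a proper analytic subset of the smooth manifold $T$, hence of codimension $\ge 1$, and the fiber integrals defining the coefficients of $\Phi^{1,0}$ are bounded because the cycles $\Sigma'(t)$ have constant, hence locally bounded, mass in the flat family. By Riemann's removable-singularity theorem these bounded holomorphic coefficients extend holomorphically across $T\setminus T^{\circ}$, so $\Phi^{1,0}$ extends to a global holomorphic $1$-form on $T$; as $T$ is projective this is a genuine section of $\Omega^{1}_{T}$. Since $\Phi$ and the smooth form $\Phi^{1,0}+\overline{\Phi^{1,0}}$ agree on the dense open $T^{\circ}$ and both are locally integrable, they agree as currents on all of $T$, proving that $p_{\ast}\left(\left.q^{\ast}(h')\right\vert_{I_{T}}\right)$ is the sum of a globally defined holomorphic $1$-form and its conjugate.
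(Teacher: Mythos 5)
Your argument is correct, and at its core it is the same argument the paper uses: the paper proves this lemma not in a proof environment but in the discussion immediately preceding it, by restricting $h^{\prime}$ to $I_{T}$ at a general $t$, observing that only the types $\left(  n,n-1\right)  +\left(  n-1,n\right)  $ survive with $\omega$ real of type $\left(  n-1,n-1\right)  $ in a local product trivialization, and applying the Cartan--Lie formula to kill the $\overline{\partial}$-derivative. Your version repackages that computation intrinsically as $\overline{\partial}\circ p_{\ast}=p_{\ast}\circ\overline{\partial}$ together with $\overline{\partial}$-closedness of the harmonic Hodge component $\left(  h^{\prime}\right)  ^{n,n-1}$; this buys a cleaner statement of exactly what is used (harmonicity rather than mere closedness of $h^{\prime}$ --- your remark that closedness alone only gives $\overline{\partial}\left(  h^{\prime}\right)  ^{n,n-1}=-\partial\left(  h^{\prime}\right)  ^{n-1,n}$ is exactly the right point, and it is the same fact the paper uses implicitly when it treats $Ch^{\prime}$ as closed). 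You also add a step the paper silently omits, namely the extension of $\Phi^{1,0}$ across the locus where $p|_{I_{T}}$ fails to be a submersion. That addition is welcome, but its justification is the one soft spot: constancy of the mass of $\Sigma^{\prime}\left(  t\right)  $ bounds $\int_{\Sigma^{\prime}\left(  t\right)  }\psi$ for a \emph{fixed} bounded form $\psi$, whereas the coefficients of $\Phi^{1,0}$ involve contracting $q^{\ast}\left(  h^{\prime}\right)  ^{n,n-1}$ with a lift of a base vector field tangent to $I_{T}$, and such a lift can blow up as the fiber degenerates, so local boundedness is not immediate from what you wrote. Two standard repairs: either note that $p_{\ast}$ of a smooth form along a proper map is an $L_{loc}^{1}$ form which is $\overline{\partial}$-closed as a current on all of $T$, so that $\Phi^{1,0}$ is a $\overline{\partial}$-closed $\left(  1,0\right)  $-current and hence a holomorphic one-form everywhere by elliptic regularity; or factor $\Phi$ as the pullback of a translation-invariant form on $J\left(  X\right)  $ under the Abel--Jacobi map, which is continuous on $T$ and holomorphic off a proper analytic subset, hence holomorphic.
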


We will also need an extension of that standard fact about the holomorphicity
of the Abel-Jacobi map.

\begin{lemma}
\label{holo}The differential%
\[
d%
{\displaystyle\int\nolimits_{\Sigma^{\prime}\left(  t_{0}\right)  }%
^{\Sigma^{\prime}\left(  t\right)  }}
\overline{\partial}^{\ast}G\left(  \Sigma\right)
\]
is holomorphic on $\left(  T-\left\vert p_{\ast}q^{\ast}\left(  \Sigma\right)
\right\vert \right)  $ with logarithmic poles and integral residues at
components of $\left\vert p_{\ast}q^{\ast}\left(  \Sigma\right)  \right\vert $.
\end{lemma}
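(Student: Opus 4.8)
The plan is to set $\Phi:=\overline{\partial}^{\ast}G(\Sigma)$ and to regard $F(t):=\int_{\Sigma'(t_0)}^{\Sigma'(t)}\Phi$ as the multivalued function obtained by integrating $\Phi$ over a chain $\Gamma'(t)$ with $\partial\Gamma'(t)=S(t)-S(t_0)$, where $S(t):=q_{\ast}p^{\ast}(\{t\})$; its differential $dF$ is single-valued off the incidence divisor $D:=|p_{\ast}q^{\ast}(\Sigma)|$. First I would express this differential as the fibre integral $dF=p_{\ast}(q^{\ast}\Phi|_{I_T})$ over $T-D$, exactly as in the Cartan-Lie computation preceding Lemma \ref{holo'}: since $d\Phi$ is a multiple of the current $\Sigma$ and hence supported on $|\Sigma|$, while over $T-D$ the fibres $S(t)$ avoid $|\Sigma|$, Stokes reduces the variation of $\int_{\Gamma'(t)}\Phi$ to the flux across the moving boundary $S(t)$. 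Where $\dim(|\Sigma|\cap S(t))>0$ for some $t$ I would first replace $\Sigma$ by the rationally equivalent cycle $\hat\Sigma$ of \cite{FL}, so that this locus has codimension $\ge 2$ in $T$ and the incidence divisor $p_{\ast}(q^{\ast}(\hat\Sigma)\cdot I_T)$ is well defined; the statement of the Lemma is unchanged by this replacement.

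Next I would establish holomorphy on $T-D$ by the same Hodge-type argument that underlies Lemma \ref{holo'}. As $\Sigma$ is an analytic $(n-1)$-cycle it is a current of pure bidegree $(n,n)$; the Green operator preserves bidegree and $\overline{\partial}^{\ast}$ lowers it, so $\Phi$ has pure type $(n,n-1)$. Integrating $q^{\ast}\Phi|_{I_T}$ over the fibres $S(t)$ picks out the fibre-top part of type $(n-1,n-1)$; paired with the directions of $T$, the pure type $(n,n-1)$ of $\Phi$ forces the resulting one-form to be of type $(1,0)$, since a $d\bar t$-term would require $\Phi$ to possess a component of type $(n-1,n)$, which it does not. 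This is the mechanism recorded as $\int_{\Sigma(t)}\omega-\omega=0$ before Lemma \ref{holo'}. Being locally the differential of $F$, the form $dF$ is closed (equivalently $d\,p_{\ast}(q^{\ast}\Phi|_{I_T})=p_{\ast}(q^{\ast}(d\Phi)|_{I_T})$ vanishes over $T-D$ because $d\Phi$ is supported on $|\Sigma|$); and a closed form of type $(1,0)$ is holomorphic. Hence $dF$ is holomorphic on $T-D$.

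It remains to analyze the behaviour along $D$. Near a general point of a component $D_j$ the fibre $S(t)$ meets $|\Sigma|$ transversally, and along the codimension-$n$ locus $|\Sigma|$ the current $\Phi=\overline{\partial}^{\ast}G(\Sigma)$ carries the standard Bochner-Martinelli-type singularity, normalized (as for $\omega_{\Sigma}$) to have period $1$ over a small sphere linking $\Sigma$. Integrating this model singularity over $\Gamma'(t)$ as $t\to D_j$ yields a simple logarithmic pole, so that $dF-\frac{m_j}{2\pi i}\frac{dt_j}{t_j}$ extends holomorphically across $D_j$ for a local equation $t_j$ of $D_j$. The winding-number computation $\left(\ref{inf}\right)$ shows that as $t$ encircles $D_j$ the height changes by the local intersection number, which identifies the integer $m_j$ with the multiplicity of $D_j$ in $p_{\ast}(q^{\ast}(\hat\Sigma)\cdot I_T)$; hence the residues are integral. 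Summing over the components of $D$ gives the Lemma.

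I expect the real obstacle to lie in this last step: pinning down the exact order of the pole of the pushforward of $\overline{\partial}^{\ast}G(\Sigma)$ and verifying that its residue is exactly the integral intersection multiplicity, including the bookkeeping of multiplicities at non-transverse or positive-dimensional excess intersections handled through the $\hat\Sigma$ replacement. The integrality of the residues is ultimately the normalization encoded in $\left(\ref{inf}\right)$, so the type-theoretic input of the middle paragraph is comparatively routine by contrast.
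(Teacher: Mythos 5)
Your opening two paragraphs reproduce the paper's own argument for holomorphy away from the incidence divisor: the paper likewise reduces to the case that $T$ is a curve, observes that $\overline{\partial}^{\ast}G(\Sigma)$ has pure type $(n,n-1)$ and is $d$- and $\overline{\partial}$-closed off $\left\vert \Sigma\right\vert$, and kills the $\overline{dt}$-component of the Cartan--Lie derivative by type. That part of your proposal is correct and is essentially the paper's proof.

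The gap is exactly where you predict it: the assertion that the pushforward of $\overline{\partial}^{\ast}G(\Sigma)$ acquires precisely a simple logarithmic pole along each component of $\left\vert p_{\ast}q^{\ast}(\Sigma)\right\vert$. You appeal to a ``standard Bochner--Martinelli-type singularity'' of $G(\Sigma)$ along $\left\vert\Sigma\right\vert$, but $G$ is a global operator, and the statement that its singular kernel integrates against the family $I_{T}$ to give $\frac{m_{j}}{2\pi i}\frac{dt_{j}}{t_{j}}$ plus a holomorphic remainder is the entire content of the lemma at the divisor; your sketch asserts it rather than proves it. The paper closes this differently: it isolates the \emph{single-valued} imaginary part $\left[\Sigma.\Sigma^{\prime}(t)\right]$ and quotes its logarithmic growth near $\left\vert p_{\ast}q^{\ast}(\Sigma)\right\vert$ --- either from \cite{Le} or \cite{Wa}, or from \cite{GS} \S 1.3 via a log resolution on which the Green current of log type is locally $\sum_{i}a_{i}\log(s_{i}\overline{s_{i}})+\beta$ with $a_{i},\beta$ smooth, arranging the proper transform $\hat{I}_{T}$ transverse to the exceptional divisor and bounding $\sum_{i}\left\vert\log(s_{i}\overline{s_{i}})\right\vert$ by $C\left\vert\log\left((t-t_{1})\overline{(t-t_{1})}\right)\right\vert+C^{\prime}$. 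The elementary fact that a harmonic function with logarithmic growth on a punctured disk is $\lambda\log\left\vert t-t_{1}\right\vert+u(t)$ with $u$ harmonic then pins the singularity down to exactly a logarithmic pole, with no local model of the Green kernel needed. Note also that your normalization claim (``period $1$ over a small linking sphere'') conflates the two halves of $\overline{\partial}^{\ast}G(\Sigma)=2\left(d^{\ast}+i(d^{C})^{\ast}\right)G(\Sigma)$: the integral linking periods belong to the real part $\omega_{\Sigma}=d^{\ast}G(\Sigma)$ --- this is $(\ref{inf})$, and it is what yields the integrality of the residues, as in your last step and in the paper's discussion following the lemma --- whereas the $(d^{C})^{\ast}G(\Sigma)$ part has no period and is controlled only through the growth estimate. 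To complete your route you would have to prove the simple-pole statement for the pushforward directly, which amounts to redoing the asymptotics of \cite{Le} and \cite{Wa}; otherwise it must be imported as the paper does.
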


\begin{proof}
It will suffice to prove the Lemma in the case that $T$ is an algebraic
curve.
\begin{align*}
&
{\displaystyle\int\nolimits_{\Sigma^{\prime}\left(  t_{0}\right)  }%
^{\Sigma^{\prime}\left(  t\right)  }}
\left(  d^{\ast}+i\left(  d^{C}\right)  ^{\ast}\right)  G\left(  \Sigma\right)
\\
&  =\frac{1}{2}%
{\displaystyle\int\nolimits_{\Sigma^{\prime}\left(  t_{0}\right)  }%
^{\Sigma^{\prime}\left(  t\right)  }}
\overline{\partial}^{\ast}G\left(  \Sigma\right)  .
\end{align*}
Now $\overline{\partial}^{\ast}G\left(  \Sigma\right)  $ is of type $\left(
n,n-1\right)  $ and is $d$- and $\overline{\partial}$-closed off $\left\vert
\Sigma\right\vert $. So by the Cartan-Lie formula%
\begin{align*}
d%
{\displaystyle\int\nolimits_{\Sigma^{\prime}\left(  t_{0}\right)  }%
^{\Sigma^{\prime}\left(  t\right)  }}
\overline{\partial}^{\ast}G\left(  \Sigma\right)   &  =\left(
{\displaystyle\int\nolimits_{\Sigma^{\prime}\left(  t\right)  }}
\left\langle \left.  \frac{\partial}{\partial t}\right\vert \overline
{\partial}^{\ast}G\left(  \Sigma\right)  \right\rangle \right)  dt+\left(
{\displaystyle\int\nolimits_{\Sigma^{\prime}\left(  t\right)  }}
\left\langle \left.  \overline{\frac{\partial}{\partial t}}\right\vert
\overline{\partial}^{\ast}G\left(  \Sigma\right)  \right\rangle \right)
\overline{dt}\\
&  =\left(
{\displaystyle\int\nolimits_{\Sigma^{\prime}\left(  t\right)  }}
\left\langle \left.  \frac{\partial}{\partial t}\right\vert \overline
{\partial}^{\ast}G\left(  \Sigma\right)  \right\rangle \right)  dt
\end{align*}
by type. So $\left\langle \Sigma.\Sigma^{\prime}\left(  t\right)
\right\rangle $ is a (multivalued) holomorphic function with real periods.

Now by \cite{Le} or \cite{Wa} formula $\left(  1.1\right)  $, $\left[
\Sigma.\left(  \Sigma^{\prime}\left(  t\right)  \right)  \right]  $ has
logarithmic growth as $t$ approaches a point $t_{1}\in\left\vert p_{\ast
}q^{\ast}\left(  \Sigma\right)  \right\vert $. Alternatively, following
\cite{GS}, \S 1.3, there is a log resolution $Z$ of $X$ with center
$\left\vert \Sigma\right\vert $ on which the Green current of log type
$\eta_{\Sigma}$ with $d^{\ast}G\left(  \Sigma\right)  =d^{C}\eta_{\Sigma}$ can
be written locally in the form
\[
\eta_{\Sigma}\sim%
{\displaystyle\sum\nolimits_{i}}
a_{i}\mathrm{log}\left(  s_{i}\overline{s_{i}}\right)  +\beta
\]
where the $a_{i}$ and $\beta$ are smooth forms (\cite{GS}, (1.3.2.1)). By
further blow-ups supported over $\left\vert \Sigma\right\vert $ , we can
assume that $Z$ is so constructed that the proper transform $\hat{I}_{T}$ of
$q\left(  I_{T}\right)  $ meets the exceptional simple normal-crossing divisor
transversely along a central fiber $\widetilde{\Sigma_{t_{1}}^{\prime}}$. Then
the inequality%
\[
C\left\vert \mathrm{log}\left(  t-t_{1}\right)  \overline{\left(
t-t_{1}\right)  }\right\vert +C^{\prime}\geq\left.
{\displaystyle\sum\nolimits_{i}}
\left\vert \mathrm{log}\left(  s_{i}\overline{s_{i}}\right)  \right\vert
\right\vert _{\hat{I}_{T}}%
\]
for some positive constants $C$ and $C^{\prime}$ reduces the logarithmic
growth assertion to a calculus exercise.

Thus $\left[  \Sigma.\left(  \Sigma^{\prime}\left(  t\right)  \right)
\right]  $ is a well-defined, harmonic function of $t$ on $\left(
T-\left\vert p_{\ast}q^{\ast}\left(  \Sigma\right)  \right\vert \right)  $ and
has logarithmic growth at $\left\vert p_{\ast}q^{\ast}\left(  \Sigma\right)
\right\vert $, and so by an elementary theorem in one complex variable (e.g.
\cite{A}, p. 165, ex. 2) we can locally write%
\[
\left[  \Sigma.\left(  \Sigma^{\prime}\left(  t\right)  \right)  \right]
=\lambda\cdot\mathrm{log}\left\vert t-t_{1}\right\vert +u\left(  t\right)
\]
where $u$ is harmonic on a neighborhood of $t_{1}$. Thus locally%
\begin{equation}
\left\langle \Sigma.\left(  \Sigma^{\prime}\left(  t\right)  \right)
\right\rangle =\lambda\cdot2\pi i\cdot\mathrm{log}\left(  t-t_{1}\right)
+v\left(  t\right)  \label{lochol}%
\end{equation}
where $v$ is holomorphic in a neighborhood of $t_{1}$.
\end{proof}

For example, suppose $X=T$ is a smooth projective curve and $q_{\ast}p^{\ast
}\left(  \left\{  t\right\}  \right)  =\left\{  t\right\}  $. Let $\psi_{t}$
be the unique meromorphic $1$-form on $T$ with real periods and logarithmic
poles at $t$ and $t_{0}$ and residue $1$ at $t$ and residue $-1$ at $t_{0}$
and let $\gamma_{t}$ be a path on $T$ from $t_{0}$ to $t$. Then Lemma
\ref{holo} just above implies that%
\[
\frac{1}{2}\overline{\partial}^{\ast}G\left(  \left\{  t\right\}  -\left\{
t_{0}\right\}  \right)  =\psi_{t}.
\]
Then%
\[
t^{\prime}\mapsto e^{2\pi i%
{\displaystyle\int\nolimits_{t_{0}}^{t^{\prime}}}
\psi_{t}}%
\]
is a meromorphic section of the flat line bundle whose $U\left(  1\right)  $
representation is given by the periods of $\psi_{t}$. The divisor given by
this section is $\left\{  t\right\}  -\left\{  t_{0}\right\}  $.

We continue to suppose that $\Sigma^{\prime}\left(  t\right)  $ is given by a
flat family%
\[
\left(  q_{\ast}p^{\ast}\left(  \left\{  t\right\}  \right)  -q_{\ast}p^{\ast
}\left(  \left\{  t_{0}\right\}  \right)  \right)
\]
as above. $\left[  \Sigma.\Sigma^{\prime}\left(  t\right)  \right]  $ is
well-defined on $\left(  T-\left\vert p_{\ast}q^{\ast}\left(  \Sigma\right)
\right\vert \right)  $ and has logarithmic growth at $\left\vert p_{\ast
}q^{\ast}\left(  \Sigma\right)  \right\vert $. $\left(  \Sigma.\left(
\Sigma^{\prime}\left(  t\right)  -\Sigma^{\prime}\left(  t_{0}\right)
\right)  \right)  $ is locally well-defined on $\left(  T-\left\vert p_{\ast
}q^{\ast}\left(  \Sigma\right)  \right\vert \right)  $ and has integral
periods around $\left\vert p_{\ast}q^{\ast}\left(  \Sigma\right)  \right\vert
$ since%
\[
d\left(  d^{\ast}G\left(  \Sigma\right)  \right)  =\Sigma
\]
and $\Sigma$ is an integral cycle.

\begin{lemma}
\label{fair}Let $\Sigma\in CH_{hom}^{n}\left(  X\right)  $ and let%
\[
D_{\Sigma}:=p_{\ast}\left(  q^{\ast}\left(  \Sigma\right)  \cdot I_{T}\right)
.
\]
Then%
\begin{equation}
D_{\Sigma}=\left(  e^{2\pi i\left\langle \Sigma.\left(  \Sigma^{\prime}\left(
t\right)  -\Sigma^{\prime}\left(  t_{0}\right)  \right)  \right\rangle
}\right)  . \label{interper}%
\end{equation}

\end{lemma}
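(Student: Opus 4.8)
The plan is to prove $\left(\ref{interper}\right)$ by exhibiting $s\left(t\right):=e^{2\pi i\left\langle \Sigma.\left(\Sigma^{\prime}\left(t\right)-\Sigma^{\prime}\left(t_{0}\right)\right)\right\rangle }$ as a global meromorphic section of a flat line bundle on $T$ and then checking that its divisor agrees, component by component, with $D_{\Sigma}=p_{\ast}\left(q^{\ast}\left(\Sigma\right)\cdot I_{T}\right)$. The argument breaks into three stages: global well-definedness of $s$; computation of $\mathrm{ord}\left(s\right)$ along each component of $\left\vert p_{\ast}q^{\ast}\left(\Sigma\right)\right\vert$; and the identification of that order with the incidence multiplicity.

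First I would invoke Lemma \ref{holo}: the differential $d\left\langle \Sigma.\Sigma^{\prime}\left(t\right)\right\rangle $ is holomorphic on $T-\left\vert p_{\ast}q^{\ast}\left(\Sigma\right)\right\vert $ with logarithmic poles and integral residues along the components of $\left\vert p_{\ast}q^{\ast}\left(\Sigma\right)\right\vert $. Combined with the fact recorded just before the Lemma that the real part $\left(\Sigma.\left(\Sigma^{\prime}\left(t\right)-\Sigma^{\prime}\left(t_{0}\right)\right)\right)$ has integral periods around $\left\vert p_{\ast}q^{\ast}\left(\Sigma\right)\right\vert $ (since $d\left(d^{\ast}G\left(\Sigma\right)\right)=\Sigma$ and $\Sigma$ is integral), this shows that all the monodromies of the multivalued function $\left\langle \Sigma.\left(\Sigma^{\prime}\left(t\right)-\Sigma^{\prime}\left(t_{0}\right)\right)\right\rangle $ are killed by $e^{2\pi i\left(\cdot\right)}$ up to the flat $U\left(1\right)$-holonomy. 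Hence $s$ is single-valued, holomorphic, and nowhere zero on $T-\left\vert p_{\ast}q^{\ast}\left(\Sigma\right)\right\vert $; it is a meromorphic section of the flat line bundle determined by those periods, and $\mathrm{div}\left(s\right)$ is supported on $\left\vert p_{\ast}q^{\ast}\left(\Sigma\right)\right\vert =\left\vert D_{\Sigma}\right\vert $.

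Next, to read off the order of $s$, I would restrict to a general curve meeting a fixed component transversally (the reduction to $\dim T=1$ used in the proof of Lemma \ref{holo}) and use the local description $\left(\ref{lochol}\right)$ near a point $t_{1}$ of the component: there $\left\langle \Sigma.\Sigma^{\prime}\left(t\right)\right\rangle $ differs from a holomorphic function by a single logarithmic term whose coefficient is the integral residue $\lambda$ furnished by Lemma \ref{holo}. Applying $e^{2\pi i\left(\cdot\right)}$ then converts the holomorphic part into a non-vanishing unit and the logarithmic term into the factor $\left(t-t_{1}\right)^{\lambda}$, so that $\mathrm{ord}_{t_{1}}\left(s\right)=\lambda$. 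This is consistent with the curve example above, where the analogous section has divisor $\left\{t\right\}-\left\{t_{0}\right\}$.

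The crux, and the step I expect to be the main obstacle, is to identify this integral residue $\lambda$ with the multiplicity of the component in $D_{\Sigma}=p_{\ast}\left(q^{\ast}\left(\Sigma\right)\cdot I_{T}\right)$. I would derive it from the winding-number computation leading to $\left(\ref{inf}\right)$: as $t$ encircles $t_{1}$ once, the fibers $\Sigma^{\prime}\left(t\right)$ sweep across $\Sigma$ a net number of times equal to the local intersection number of $\Sigma$ with the incidence family, and by $\left(\ref{inf}\right)$ each transverse crossing changes the real height pairing $\left(\Sigma.\Sigma^{\prime}\left(t\right)\right)$ by $\pm1$; this net change is precisely the monodromy of $\left\langle \Sigma.\Sigma^{\prime}\left(t\right)\right\rangle $, hence the residue $\lambda$. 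Equivalently, following \cite{GS}, \S 1.3 and \cite{Wa}, one reads $\lambda$ off as the coefficient of logarithmic growth of the archimedean height $\left[\Sigma.\Sigma^{\prime}\left(t\right)\right]$, which is the local intersection number. The delicate points are the components along which the crossing fails to be transverse and the case $\dim\left(\left\vert \Sigma\right\vert \cap\left\vert \Sigma^{\prime}\left(t\right)\right\vert \right)>0$; for these I would pass to the rationally equivalent deformation $\hat{\Sigma}$ supplied by \cite{FL} above, for which $p_{\ast}\left(q^{\ast}\left(\hat{\Sigma}\right)\cdot I_{T}\right)$ is well-defined with rational-equivalence class independent of the deformation, thereby reducing the residue computation to the transverse case. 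Summing the local contributions then yields $\mathrm{div}\left(s\right)=D_{\Sigma}$, which is $\left(\ref{interper}\right)$.
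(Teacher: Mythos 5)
Your proposal is correct and follows essentially the same route as the paper: reduce to the case where $T$ is a smooth curve, and identify the residue of $\left\langle \Sigma.\Sigma^{\prime}\left(t\right)\right\rangle$ along a component of $\left\vert D_{\Sigma}\right\vert$ with the local incidence multiplicity. The paper packages that last identification as the projection-formula chain $\mathrm{deg}\left(\Delta\cdot D_{\Sigma}\right)=\mathrm{deg}\left(q_{\ast}p^{\ast}\left(\Delta\right)\cdot_{X}\Sigma\right)$ together with the observation that $\int_{\partial B_{x_{0}}\cap q\left(I_{\Delta}\right)}d^{\ast}G\left(\Sigma\right)$ equals the intersection multiplicity at $x_{0}$, which is the same winding-number computation you invoke via $\left(\ref{inf}\right)$ and Lemma \ref{holo}.
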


\begin{proof}
Again it suffices to consider the case in which $T$ is a smooth curve. If
$\Delta\subseteq T$ is a small disk around a point of $\left\vert p_{\ast
}\left(  q^{\ast}\left(  \Sigma\right)  \cdot I_{T}\right)  \right\vert $,
then with respect to the family%
\[%
\begin{array}
[c]{ccc}%
I_{\Delta}\subseteq\Delta\times X & \overset{q}{\longrightarrow} & X\\
\downarrow^{p} &  & \\
\Delta &  &
\end{array}
\]
we have%
\begin{align*}
\mathrm{deg}\left(  \Delta\cdot D_{\Sigma}\right)   &  =\mathrm{deg}\left(
\Delta\cdot_{\Delta}p_{\ast}\left(  q^{\ast}\left(  \Sigma\right)  \cdot
I_{T}\right)  \right) \\
&  =\mathrm{deg}\left(  p^{\ast}\left(  \Delta\right)  \cdot_{\Delta\times
X}q^{\ast}\left(  \Sigma\right)  \right) \\
&  =\mathrm{deg}\left(  q_{\ast}p^{\ast}\left(  \Delta\right)  \cdot_{X}%
\Sigma\right)  .
\end{align*}
Now use that, if $q\left(  I_{\Delta}\right)  $ and $\Sigma$ intersect
properly at $x_{0}\in X$ and $B_{x_{0}}$ is a ball around $x_{0}$ in $X$, then%
\[%
{\displaystyle\int\nolimits_{\partial B_{x_{0}}\cap q\left(  I_{\Delta
}\right)  }}
d^{\ast}G\left(  \Sigma\right)
\]
equals the intersection multiplicity of $q\left(  I_{\Delta}\right)  $ and
$\Sigma$ at $x_{0}$.
\end{proof}

Furthermore, following Narasimhan and Seshadri, holomorphic line bundles of
degree $0$ on $J\left(  X\right)  $ correspond to unitary representations%
\[
\pi_{1}\left(  J\left(  X\right)  ,0\right)  \rightarrow U\left(  1\right)  .
\]
(See \cite{NS} \S 12. Alternatively see \cite{Mum}, pp. 86-87.) Said otherwise
any element
\begin{align*}
\beta &  \in J\left(  X\right)  ^{\vee}=\mathrm{Hom}\left(  H_{2n-1}\left(
X;\mathbb{Z}\right)  ,\frac{\mathbb{R}}{\mathbb{Z}}\right) \\
&  =\mathrm{Hom}\left(  \pi_{1}\left(  J\left(  X\right)  \right)  ,U\left(
1\right)  \right)
\end{align*}
corresponds to a line bundle $L_{\beta}$ with flat unitary connection.

\begin{corollary}
\label{good}Let $\left\{  \Sigma\right\}  \in CH_{hom}^{n}\left(  X\right)  $.
Let%
\[
L_{\beta_{\Sigma}}%
\]
denote the flat line bundle associated to $\beta_{\Sigma}$. For any family
$\left\{  \Sigma^{\prime}\left(  t\right)  \right\}  _{t\in T}$ of algebraic
$\left(  n-1\right)  $-cycles homologous to zero with $T$ smooth, let%
\begin{gather*}
T\overset{\alpha_{T}}{\longrightarrow}J\left(  X\right) \\
t\mapsto\left(  q_{\ast}p^{\ast}\left(  \gamma_{t}\right)  \cdot I_{T}\right)
\end{gather*}
for $\partial\gamma_{t}=\left\{  t\right\}  -\left\{  t_{0}\right\}  $ denote
the Abel-Jacobi mapping. Then $e^{2\pi i\left\langle \Sigma.\left(
\Sigma^{\prime}\left(  t\right)  \right)  \right\rangle }$ is a meromorpic
section of $\alpha_{T}^{\ast}\left(  L_{\beta_{\Sigma}}\right)  $ and%
\begin{equation}
\alpha_{T}^{\ast}\left(  L_{\beta_{\Sigma}}\right)  =\mathcal{O}_{T}\left(
D_{\Sigma}\right)  . \label{rep}%
\end{equation}

\end{corollary}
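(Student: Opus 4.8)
The plan is to establish the first assertion — that $e^{2\pi i\langle\Sigma.\Sigma'(t)\rangle}$ is a meromorphic section of $\alpha_T^*(L_{\beta_\Sigma})$ — and then read off $(\ref{rep})$ for free, since a meromorphic section $s$ of a line bundle $\mathcal{L}$ on $T$ exhibits a canonical isomorphism $\mathcal{L}\cong\mathcal{O}_T(\mathrm{div}(s))$, and Lemma \ref{fair} identifies $\mathrm{div}(s)=D_\Sigma$. As in Lemmas \ref{holo} and \ref{fair}, I would reduce to the case that $T$ is a smooth curve, the general statement following by restriction to generic curves. By the Narasimhan--Seshadri correspondence recalled above, $L_{\beta_\Sigma}$ is a flat holomorphic line bundle of degree $0$ on $J(X)$, so $\alpha_T^*(L_{\beta_\Sigma})$ is a flat holomorphic line bundle on $T$ whose holonomy is the composite character $\pi_1(T)\to\pi_1(J(X))\cong H_{2n-1}(X;\mathbb{Z})\xrightarrow{\beta_\Sigma}U(1)$, $m\mapsto e^{2\pi i\beta_\Sigma(m)}$. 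The base-point term $\langle\Sigma.\Sigma'(t_0)\rangle$ is a constant and affects neither the divisor nor the isomorphism class, so I freely normalize against $\Sigma'(t_0)$ when convenient.

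The meromorphicity is essentially already recorded. Write $T^{sm}=T-|p_*q^*(\Sigma)|$. By Lemma \ref{holo} the function $\langle\Sigma.\Sigma'(t)\rangle=\frac{1}{2}\int_{\Sigma'(t_0)}^{\Sigma'(t)}\overline{\partial}^*G(\Sigma)$ is multivalued holomorphic on $T^{sm}$ with logarithmic singularities and integral residues at the points of $|p_*q^*(\Sigma)|$. Exponentiating the local expression $(\ref{lochol})$, the log singularity contributes a zero or pole of integer order, so $e^{2\pi i\langle\Sigma.\Sigma'(t)\rangle}$ extends across each such point as a meromorphic function, and by Lemma \ref{fair} its divisor is exactly $D_\Sigma$.

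It remains to check that $s(t):=e^{2\pi i\langle\Sigma.\Sigma'(t)\rangle}$ transforms under monodromy around loops in $T^{sm}$ exactly as a section of $\alpha_T^*(L_{\beta_\Sigma})$ must. Fix a loop $\gamma$ in $T^{sm}$. Transporting the chain $\Gamma'_t$ with $\partial\Gamma'_t=\Sigma'(t)$ once around $\gamma$ returns it to itself up to an integral $(2n-1)$-cycle $\Theta'_\gamma\in H_{2n-1}(X;\mathbb{Z})$, and by the definition of the Abel--Jacobi map this class is precisely $(\alpha_T)_*[\gamma]$ under $\pi_1(J(X))\cong H_{2n-1}(X;\mathbb{Z})$. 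The imaginary part $[\Sigma.\Sigma'(t)]$ is single-valued on $T^{sm}$ (being the harmonic function of Lemma \ref{holo}), so the only monodromy of $\langle\Sigma.\Sigma'(t)\rangle$ comes from the real part $(\Sigma.\Sigma'(t))=\int_{\Gamma'_t}\omega_\Sigma$, which changes by $\int_{\Theta'_\gamma}\omega_\Sigma$; by the identity $(\ref{zee})$ this equals $\beta_\Sigma(\Theta'_\gamma)$ modulo $\mathbb{Z}$. Hence $s$ picks up the factor $e^{2\pi i\beta_\Sigma(\Theta'_\gamma)}=e^{2\pi i\beta_\Sigma((\alpha_T)_*[\gamma])}$ around $\gamma$, which is exactly the holonomy character of $\alpha_T^*(L_{\beta_\Sigma})$ along $\gamma$ (the sign being fixed by the equivariance convention for flat bundles). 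Thus $s$ is a globally well-defined meromorphic section of $\alpha_T^*(L_{\beta_\Sigma})$, which completes the first assertion; $(\ref{rep})$ then follows from the first paragraph, consistently with $\deg D_\Sigma=0$, which holds because $\Sigma$ is homologous to zero while $L_{\beta_\Sigma}$ has degree $0$.

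The step I expect to be the crux is the monodromy identification in the third paragraph: matching the real period group $G_\Sigma=\{\int_{\Theta'}\omega_\Sigma\}$ of the topological height pairing with the $U(1)$-holonomy representation $\beta_\Sigma\circ(\alpha_T)_*$ of the pulled-back flat bundle. Everything turns on showing that the integral cycle $\Theta'_\gamma$ swept out by $\Gamma'_t$ is exactly $(\alpha_T)_*[\gamma]$ and that its period is computed by $\beta_\Sigma$, which is where $(\ref{zee})$ and the definitions of $\alpha_\Sigma$ and $\beta_\Sigma$ must be deployed carefully. Once this is in place, the meromorphicity and the divisor computation are immediate from Lemmas \ref{holo} and \ref{fair}.
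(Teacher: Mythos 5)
Your proposal is correct and follows essentially the same route as the paper: both arguments rest on the identity $(\ref{zee})$ to identify the period group of $\omega_{\Sigma}=d^{\ast}G(\Sigma)$ over integral $(2n-1)$-cycles with the values of $\beta_{\Sigma}$ (mod $\mathbb{Z}$), hence with the holonomy character of $L_{\beta_{\Sigma}}$ pulled back along $\alpha_{T}$, and then invoke Lemma \ref{fair} together with the local expression $(\ref{lochol})$ to get meromorphicity and the divisor $D_{\Sigma}$. Your third paragraph merely makes explicit the monodromy computation (identifying the cycle swept out by $\Gamma_{t}^{\prime}$ around a loop $\gamma$ with $(\alpha_{T})_{\ast}[\gamma]$) that the paper's terser proof leaves implicit.
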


\begin{proof}
As in $\left(  \ref{zee}\right)  $ we have modulo $\mathbb{Z}$ that%
\[%
{\displaystyle\int\nolimits_{\Theta^{\prime}}}
d^{\ast}G\left(  \Sigma\right)  \equiv%
{\displaystyle\int\nolimits_{\Gamma}}
h\left(  \Theta^{\prime}\right)
\]
for any $\Theta^{\prime}\in H_{2n-1}\left(  X;\mathbb{Z}\right)  $. In fact,
if $\Theta^{\prime}$ runs through an integral basis of $H_{2n-1}\left(
X;\mathbb{Z}\right)  $, $\left\{
{\displaystyle\int\nolimits_{X}}
h_{\Gamma}\wedge h_{\Theta^{\prime}}\right\}  $ exactly describes the image of
$\Sigma$ under the
\[
CH_{hom}^{n}\left(  X\right)  \overset{\alpha}{\longrightarrow}J\left(
X\right)
\]
while $%
{\displaystyle\int\nolimits_{\Theta^{\prime}}}
d^{\ast}G\left(  \Sigma\right)  $ exactly determines the periods of $d^{\ast
}G\left(  \Sigma\right)  $ and therefore the $U\left(  1\right)
$-representation%
\[
\pi_{1}\left(  J\left(  X\right)  ,0\right)  \rightarrow U\left(  1\right)
\]
defining $L_{\beta_{\Sigma}}$. Since%
\[%
{\displaystyle\int\nolimits_{\Theta^{\prime}}}
d^{\ast}G\left(  \Sigma\right)  \equiv%
{\displaystyle\int\nolimits_{X}}
h_{\Gamma}\wedge h_{\Theta^{\prime}}%
\]
for $\Theta^{\prime}\in H_{2n-1}\left(  X;\mathbb{Z}\right)  $, we have by
Lemma \ref{fair} that the meromorphic section $\left(  e^{2\pi i\left\langle
\Sigma.\left(  \Sigma^{\prime}\left(  t\right)  -\Sigma^{\prime}\left(
t_{0}\right)  \right)  \right\rangle }\right)  $ of $\alpha_{T}^{\ast}\left(
L_{\alpha\left(  \Sigma\right)  }\right)  $ has associated divisor $D_{\Sigma
}$.
\end{proof}

Notice that, if $X$ is a curve, $T=X=I_{T}$, and $\Delta\in CH_{0}\left(
X\right)  _{hom}$, then $\Delta=\Sigma\left(  \Delta\right)  =D_{\Delta}$ and
the content of the above corollary is Abel's theorem.


\begin{thebibliography}{999}                                                                                              %


\bibitem[A]{A}Ahlfors, L. \textit{Complex Analysis}. McGraw-Hill, New York, 1966.

\bibitem[C]{C}Clemens, H. \textquotedblleft Intersection numbers for normal
functions." Preprint: arXiv:1010.1543 [math.AG] 10 Oct 2010. J. Alg. Geom. (to appear).

\bibitem[dR]{dR}de Rham, G. \textit{Differentiable Manifolds}. Berlin:
Springer-Verlag 1984.

\bibitem[DK]{DK}Dupont, J., Kamber, F. \textquotedblleft A generalization of
Abel's Theorem and the Abel-Jacobi map. Preprint: arXiv:0811.0961v2[math], 2
Dec 2008.

\bibitem[F]{F}Fulton, W. \textit{Intersection theory.} Berlin: Springer-Verlag 1984.

\bibitem[FL]{FL}Friedlander, E., Lawson, B. \textquotedblleft Moving algebraic
cycles of bounded degree." Invent. Math.\textbf{132}(1998), 91-119.

\bibitem[GS]{GS}Gillet, H., Soul\'{e}, C. \textquotedblleft Arithmetic
intersection theory." Publ. Math. IHES \textbf{72}(1990), 94-174.

\bibitem[Hai]{Hai}Hain, R. \textquotedblleft Biextensions and heights
associated to curves of odd genus." Duke Math. J. \textbf{61}(1990), 859-898.

\bibitem[Har]{Har}Harris, B. \textquotedblleft Cycle pairings and the heat
equation." Topology \textbf{32}(1993), 225-238.

\bibitem[HLZ]{HLZ}Harvey, R., Lawson, B., Zweck, J. \textquotedblleft The
deRahm-Federer theory of differential characters and character duality."
Preprint: arXiv:0512.251v1 [math.DG] 12 Dec 2009..

\bibitem[Le]{Le}Lear, D. \textquotedblleft Extensions of normal functions and
asymptotics of the height pairing." PhD Thesis, University of Washington, 1990.

\bibitem[Mum]{Mum}Mumford, D. \textit{Abelian Varieties}. London: Oxford
University Press 1970.

\bibitem[Mur]{Mur}Murre, J. \textquotedblleft Abel-Jacobi equivalence versus
incidence equivalence for algebraic cycles of codimension two." Topology
\textbf{24}(1985), 361-367.

\bibitem[NS]{NS}Narasimhan, M.S, Seshadri, C.S. \textquotedblleft Stable and
unitary bundles ona compact Riemann surface." Ann. Math. \textbf{82}(1965), 540-567.

\bibitem[Wa]{Wa}Wang, Bin. \textquotedblleft Asymptotics of the archimedean
height pairing." Amer. J. Math.120(1998), 229-249.

\bibitem[We]{We}Weil, A. \textit{Vari\'{e}t\^{e}s K\"{a}hleriennes.} Paris:
Hermann, 1971.
\end{thebibliography}
\end{document}